\newcommand{\Xcomment}[1]{}
\newtheorem{theorem}{Теорема}[section]
\newtheorem{lemma}[theorem]{Лемма}
\newtheorem{corollary}[theorem]{Следствие}
\newtheorem{prop}[theorem]{Предложение}
\newenvironment{proof}{\noindent{\bf Доказательство}\/}%
{\hfill$\qed$\medskip}
\def\qed{\Box}
\makeatletter \@addtoreset{equation}{section} \makeatother
\newenvironment{numitem1}{\refstepcounter{equation}\begin{enumerate}%
\item[(\thesection.\arabic{equation})]}{\end{enumerate}}
\newcommand{\refeq}[1]{(\ref{eq:#1})}  % reference to equation
\renewcommand{\section}{\@startsection{section}{1}{0pt}%
{-3.5ex plus -1ex minus -.2ex}{2.3ex plus .2ex}%
{\normalfont\Large}}
\renewcommand{\subsection}{\@startsection{subsection}{2}{0pt}%
{-3.0ex plus -1ex minus -.2ex}{1.5ex plus .2ex}%
{\normalfont\normalsize\bf}}
 \newcommand{\SEC}[1]{\ref{sec:#1}}  % reference to section
\def\Rset{{\mathbb R}}
\def\Ascr{{\cal A}}
\def\Cscr{{\cal C}}
\def\Escr{{\cal E}}
\def\Fscr{{\cal F}}
\def\Iscr{{\cal I}}
\def\Mscr{\EuScript{M}}
\def\Pscr{{\cal P}}
\def\Rscr{{\cal R}}
\def\Sscr{{\cal S}}
\def\Tscr{{\cal T}}
\def\tilde{\widetilde}
\def\hat{\widehat}
\def\eps{\varepsilon}
\def\Mmin{M^{\rm min}}
\def\Mmax{M^{\rm max}}
\def\replac{{\rm Repl}}
\def\Pscrst{\Pscr_{\rm st}}
\newcommand{\crepl}[3]{{#1}\stackrel{#2}{\longrightarrow}{#3}}
\begin{document}

\baselineskip=15pt
\parskip=2pt

\title{On the set of stable matchings of a bipartite graph}

\author{Alexander V.~Karzanov
\thanks{Central Institute of Economics and Mathematics of
the RAS, 47, Nakhimovskii Prospect, 117418 Moscow, Russia; email:
akarzanov7@gmail.com.}
 }

\date{}

 \maketitle

\begin{quote} 
\textbf{Abstract.}
The topic of stable matchings (marriages) in a bipartite graph has become widely popular, starting with the appearance of the classical work by Gale and Shapley. We give a detailed survey on selected known results in this field that demonstrate structural, polyhedral and algorithmic properties of such matchings and their sets, providing our description with relatively short proofs.
(The paper is written in Russian.)
 \end{quote}

\baselineskip=15pt
\parskip=2pt
%----------------------- Sec. 1

\section{Введение}  \label{sec:intr}
Начиная с классической работы Гейла и Шепли~\cite{GS}, область задач о стабильных марьяжах и их обобщений привлекала внимание многочисленных исследователей, и в этой области был собран богатый урожай интересных результатов, как теоретического, так и прикладного характера.

В задаче о стабильном марьяже (или, более определенно, о стабильном матчинге в двудольном графе) рассматривается двудольный граф $G=(V,E)$, в котором  для каждой вершины $v$ задан линейный порядок $<_v$, указывающий предпочтения на множестве ребер $\delta_G(v)$, инцидентных $v$. Пусть $I$ и $J$ -- вершинные доли в $G$. В~\cite{GS} и других работах вершины в $I$ и $J$ понимались как лица разных полов (``мужчины'' и ``женщины'', соответственно), а ребра как возможные союзы (или ``браки'')  между ними: если для вершины $m\in I$ (``man'') и инцидентных ей ребер $e=mw$ и $e'=mw'$ выполняется $e<_m e'$, это означает, что $m$ предпочитает союз с $w$ союзу с $w'$. И аналогично для $w\in J$ (``woman'') относительно порядка $<_w$.

\emph{Матчинг} в $G$ -- это подмножество ребер $M\subseteq E$, где никакие два ребра не имеют общей вершины; таким образом, можно считать, что $M$ описывает множество  ``моногамных браков''. Матчинг $M$ называется \emph{стабильным}, если для любого ребра $e=mw\in E-M$ по крайней мере одна из вершин $m$ и $w$ имеет инцидентное ребро в $M$ (определяющее выбранного ``партнера''), и это ребро является более предпочтительным для данной вершины, чем $e$.  Стабильный матчинг существует для любого двудольного $G=(V,E)$ с произвольными линейными порядками $<_v$ на $\delta_G(v)$, $v\in V$, что первоначально было доказано Гейлом и Шепли~\cite{GS} для полных двудольных графов с долями одного размера, и обобщено последующими авторами на произвольные двудольные графы.

Впоследствии развитие в области стабильности, касающейся графов, шло по нескольким направлениям. В одном из них понятие стабильности переносилось на матчинги в произвольных графах (здесь следует особо выделить основополагающие работы Ирвинга~\cite{irv} и Тана~\cite{tan} о ``стабильных расселениях по комнатам'' (stable roommates)). В другом направлении изучались вопросы стабильности при рассмотрении вариантов весовых функций на ребрах и вершинах графа (отметим, как одну из наиболее общих, задачу о ``стабильном распределении'' (stable allocation problem), которая была введена и изучена Бейу и Балински~\cite{BB}). 

В то же время целый ряд глубоких результатов был получен в пределах собственной теории стабильных матчингов для двудольных графов, и цель данной работы -- сделать обзор избранных, наиболее значительных на наш взгляд, известных результатов в этом теории. Они в большой степени связаны между собой и охватывают комбинаторные, полиэдральные, алгоритмические и др. аспекты. Ряд задач, освещаемых в обзоре, состоит в описании структурных свойств множества $\Mscr(G)$ стабильных матчингов в $(G,<)$.

Наше изложение организовано следующим образом.

Раздел~\SEC{basic} содержит описание базовых утверждений и конструкций. Он начинается с напоминания классических результатов, восходящих к работе Гейла и Шепли о существовании стабильного матчинга и о методе ``отложенных принятий'' для его построения. Затем объясняется, что в множестве $\Mscr(G)$ стабильных матчингов произвольного двудольного графа $G$ все матчинги покрывают одно и то же множество вершин (Предложение~\ref{pr:cover_vert}), и описываются операции на парах стабильных матчингов, позволяющих определить частичный порядок $\prec$ на множестве $\Mscr(G)$, представляющий его в виде дистрибутивной решетки (Предложение~\ref{pr:lattice}).

Раздел~\SEC{rotat} посвящен обсуждению ключевого понятия \emph{ротации} в стабильном матчинге. Оно было введено Ирвингом~\cite{irv} в связи с задачей на произвольном графе (stable roommates problem), но оказалось очень полезным и для исследования структуры множества стабильных матчингов двудольного графа $G$. В графе $G$ со стабильным матчингом $M$ мы понимаем под ротацией определенный цикл в $G$, чередующийся относительно $M$. К числу основных из известных утверждений, обсуждаемых в этом разделе, относятся такие: (i) трансформация вдоль ротации преобразует стабильный матчинг в стабильный (Предложение~\ref{pr:rotat-stab}); (ii) ротационные трансформации соответствуют отношениям немедленного предшествования в решетке $(\Mscr(G),\prec)$ (Предложение~\ref{pr:trass}); (iii) во всех максимальных цепях решетки $(\Mscr(G),\prec)$ множество ротаций одно и то же, обозначаемое $\Rscr_G$ (Предложение~\ref{pr:C-tau}). 

В разделе~\SEC{ideal} продолжаются обсуждения, связанные с ротациями. Здесь вводится частичный порядок $\lessdot$ на множестве $\Rscr_G$ и приводится альтернативное представление для множества стабильных матчингов $\Mscr(G)$, установленное Ирвингом и Лейтером~\cite{IL}, а именно: стабильные матчинги в $G$ взаимно-однозначно соответствуют  идеалам (или анти-цепям) посета $(\Rscr_G,\lessdot)$ (Предложение~\ref{pr:ideal-match}).

В разделе~\SEC{optimal} рассматривается усиленный, ``стоимостной'', вариант задачи о стабильном матчинге. Здесь множество ребер двудольного графа $G=(V,E,<)$ снабжаются вещественной весовой функцией $c: E\to\Rset$, и требуется найти стабильный матчинг $M$, минимизирующий общий вес $\sum(c(e)\colon e\in E)$. В частном случае, когда для ребра $e=mw$ вес $c(e)$ равняется сумме его рангов в порядках $<_m$ и $<_w$, получаем задачу об \emph{эгалитарном} стабильном матчинге (в котором стороны $I$ и $J$ ``уравниваются''), поставленную в~\cite{MW71}. Для произвольных весов $c$ задача минимизации (или максимизации) решается эффективным комбинаторным алгоритмом, указанном в~\cite{ILG}. Это является следствием того, что: стабильные матчинги представляются идеалами посета ротаций $(\Rscr_G,\lessdot)$, в котором число элементов $|\Rscr_G)|$ не превосходит числа ребер $|E|$; вес стабильного матчинга выражается, с точностью до константы, как вес соответствующего идеала (при назначении подходящих весов ротаций); и задача о нахождении идеала (или ``замкнутого множества'') минимального веса в произвольном конечном посете сводится к задаче о минимальном разрезе ориентированного графа, как показано Пикаром~\cite{pic}.

Раздел~\SEC{polyhedr} посвящен полиэдральным аспектам. Здесь описывается многогранник стабильных матчингов $\Pscrst(G)$ через  систему линейных неравенств (близкую к той, что указана Ванде Вейтом~\cite{VV}). Также, следуя полиэдральной конструкции Тео и Сетарамана~\cite{TS}, показывается, что для произвольного набора стабильных матчингов $M_1,\ldots,M_\ell$ и любого $k\le \ell$, выбирая для каждой покрытой вершины в доле $I$ ребро, имеющее порядок $k$ среди этих матчингов, мы снова получаем стабильный матчинг. В частности, когда $\ell$ нечетно и $k=(\ell+1)/2$, определяется т.н. \emph{медианный} стабильный матчинг для заданных $M_1,\ldots,M_\ell$.

В заключительном разделе~\SEC{addit} формулируется результат, полученный в~\cite{ILG}, о труднорешаемости ($\#P$-полноте) задачи определения числа стабильных матчингов в двудольном графе.  Это отвечает на вопрос Кнута~\cite{knu} об алгоритмической сложности вычисления такого числа. 

Для полноты изложения мы, как правило, сопровождаем представленные утверждения доказательствами, часто альтернативными и более короткими по сравнению с оригинальными доказательствами в работах авторов. Отметим, что везде, где мы даем ссылки на работы~\cite{IL}, \cite{ILG} и некоторые другие, соответствующие результаты в этих работах были получены для случая полного двудольного графа $K_{n,n}$ (с произвольными порядками $<$), но эти результаты без большого труда распространяются на случай произвольного двудольного графа $G$.

%----------------------- Sec. 2

\section{Определения и базовые факты}  \label{sec:basic}

На протяжении всего изложения мы рассматриваем двудольный граф $G=(V,E)$ с разбиением множества вершин $V$ на подмножества $I$ и $J$ (где каждое ребро соединяет вершины из разных подмножеств). Иногда мы будем считать граф $G$ ориентированным, с ребрами направленными от $I$ к $J$. Ребро, соединяющее вершины $m\in I$ (``man'') и $w\in J$ (``woman''), обозначается $mw$. Множество ребер, инцидентных вершине $v\in V$ обозначается $\delta(v)=\delta_G(v)$. Каждая вершина $v$ снабжена линейным порядком $<_v$, задающим \emph{предпочтения} на множестве $\delta(v)$. А именно для ребер $e,e'\in \delta(v)$, если $e<_v e'$, то ребро $e$ считается более предпочтительным для $v$, чем ребро $e'$; иногда нам также будет удобно говорить, что $e$ расположено \emph{раньше} или \emph{левее}, чем $e'$ (а $e'$ позднее, или правее, чем $e$). Если вершина $v$ ясна из контекста, можно писать $e<e'$. Обычно мы включаем порядки $<$, а также разбиение $V$ на доли $I,J$ в описание графа, применяя обозначение $G=(V,E,<)$ или $G=(I\sqcup J,E,<)$.

Матчинг $M$ в $G$ называется \emph{стабильным}, если он не допускает блокирующих ребер. Здесь ребро $e=mw\in E-M$ считается \emph{блокирующим} для $M$, если вершина $m$  либо не покрывается $M$, либо ребро $e'$ в $M$, инцидентное $m$, менее предпочтительно: $e<_m e'$, и то же самое верно для вершины $w$. 

Ниже мы приводим две группы базовых свойств стабильных матчингов в $G$.
\medskip

\noindent I. В основании теории стабильных матчингов лежит эффективный алгоритм построения стабильного матчинга Гейла и Шепли~\cite{GS} (в изложении для полного двудольного графа с долями $I,J$ одинакового размера $n$: $G\simeq K_{n,n}$). Рекурсивное изложение этого алгоритма, который часто называют ``алгоритмом отложенных принятий'' (АОП), дано МакВайти и Вилсоном в работе~\cite{MW71} (где также описан алгоритм последовательного конструирования множества всех стабильных матчингов для $G\simeq K_{n,n}$). Короткое неконструктивное доказательство существования стабильного матчинга в двудольном случае можно найти в~\cite[Sec.~18.5g]{sch}.

В АОР роли сторон $I$ и $J$ неравнозначны: одна сторона ``предлагает, а другая ``принимает или отвергает'' предложения. (Напомним, что на каждом шаге АОП произвольно выбирается не включенный в текущий марьяж ``man'' $m\in I$, который делает предложение ``woman'' $w\in J$ согласно наилучшему ребру $mw$ из еще не использованных им ребер в $\delta(m)$; это предложение сразу отвергается, если $w$ уже имеет лучшее предложение от $m'\in I$ (т.е. $m'w<_w mw$), и принимается иначе, отвергая предыдущее предложение, если таковое имеется.) Результаты естественно расширяются и для произвольного двудольного графа $G$.  Это приводит к следующему важному свойству. 

\begin{prop} \label{pr:GSalg} {\rm \cite{GS},\cite{MW71}}
В случае, когда $I$ предлагает, а $J$ принимает/отвергает, АОП находит (за линейное время $O(|V|+|E|)$) канонический стабильный матчинг, в котором выбор ребер для вершин в $I$ наилучший, а для вершин в $J$ наихудший по всем стабильным матчингам. 
 \end{prop}

Мы обозначаем этот матчинг $\Mmin=\Mmin(G)$. По симметрии, если АОП применяется в варианте, когда $J$ предлагает, то он строит стабильный матчинг, для которого выборы ребер для $J$ наилучшие, а для $I$ наихудшие; обозначим его $\Mmax=\Mmax(G)$. Уточним смысл терминов ``наилучший-наихудший'', рассматривая любой другой стабильный матчинг $L$ для $G$. А именно, если ребра $e\in \Mmin$ и $e'\in L$ имеют общую вершину $m\in I$, то $e\le_m e'$, а если общую вершину $w\in J$, то $e'\le_w e$. Здесь мы опираемся на инвариантность  множества вершин, покрытых стабильным матчингом (что тривиально в случае полного двудольного графа $K_{n,n}$, где стабильный матчинг является совершенным, т.е. покрывающим все вершины). А именно, справедливо следующее свойство, установленное в~\cite{MW70}.

\begin{prop} \label{pr:cover_vert}
Для двудольного $G=(V,E,<)$ все стабильные матчинги покрывают одно и то же множество вершин.
  \end{prop}

Чтобы показать это свойство, примем следующие определения и обозначения (которые понадобятся и в дальнейшем).

\emph{Путь} в ориентированном графе -- это последовательность $P=(v_0,e_1,v_1,\ldots,e_k,v_k)$,  где $e_i$ -- ребро, соединяющее вершины $v_{i-1}$ и $v_i$. Для пути $P$ мы можем также применять сокращенную запись через вершины: $v_0v_1\cdots v_k$, или ребра: $e_1e_2\cdots e_k$.  Ребро $e_i$ в $P$ называется  \emph{прямым} (forward), если $e_i=v_{i-1}v_i$, и \emph{обратным} (backward), если $e_i=v_iv_{i-1}$. (Мы обозначаем ребро, выходящее из $u$ и входящее в $v$, как $uv$, вместо $(u,v)$.) Путь называется \emph{ориентированным}, если все его ребра прямые, и называется \emph{простым}, если все его вершины различны. Путь из вершины $u$ в вершину $v$ может быть назван $u$--$v$ \emph{путем}.
Для подмножества ребер $A\subseteq E$ множество вершин, покрытых $A$, обозначается $V_A$. Для подмножеств $X,Y$ множества $S$ мы пишем $X-Y$ вместо $X\setminus Y$ ($=\{s\in X\colon s\notin Y\}$) и обозначаем $X\triangle Y$ симметрическую разность $(X-Y)\cup (Y-X)$.
 \smallskip

Пусть теперь $M,L$ -- два стабильных матчинга в двудольном $G$. Рассмотрим подграф $\Delta_{M,L}$ в $G$, индуцированный множеством ребер $M\triangle L$. Заметим, что

\begin{numitem1} \label{eq:3path}
если $e,e',e''$ -- различные ребра в $\Delta_{M,L}$ такие, что $e,e''$ имеют общую вершину $u$, а $e',e''$ имеют общую вершину $v$, и если $e>_u e'$, то $e'>_v e''$.
  \end{numitem1}

Действительно, пусть для определенности $e'\in M$. Тогда $e,e''\in L$. Если бы было $e'<_v e''$, то ребро $e'$ было бы блокирующим для $L$. 
\smallskip

Перейдем теперь к доказательству Предложения~\ref{pr:cover_vert}. Предположим, что $V_M\ne V_L$. Тогда по крайней мере одна из компонент в $\Delta_{M,L}$ является путем $P=(v_0,e_1,v_1,\ldots,e_k,v_k)$ (при $v_0\ne v_k$). В этом пути ребра из $M$ и $L$ чередуются, и из стабильности $M,L$ следует $k\ge 2$. Без ограничения общности можно считать, что $e_1\in M$. 
Тогда $v_0\notin V_L$, и так как ребро $e_1$ не является блокирующим для $L$, то должно выполняться $e_1>_{v_1} e_2$. 

Ввиду этого, последовательно применяя~\refeq{3path} к тройкам соседних ребер в $P$, получаем, что $e_{k-1}>_{v_{k-1}} e_k$. Но тогда при нечетном $k$ ребро $e_k$ является блокирующим для $L$ (так как $e_{k-1}\in L$, $e_k\notin L$ и $v_k\notin V_L$), а при четном $k$ ребро $e_k$ является блокирующим для $M$ (так как $e_{k-1}\in M$, $e_k\notin M$ и $v_k\notin V_M$); противоречие. \hfill$\qed$
 \medskip

Будем обозначать множество вершин в $G$, покрытых стабильным матчингом, через $\tilde V$. Очевидно, при удалении вершин в $V-\tilde V$ каждый стабильный матчинг для $G$ остается стабильным и для результирующего графа $\tilde G$. Однако в $\tilde G$ могут появиться новые стабильные матчинги (и, следовательно, включение $\Mscr(G)\subseteq \Mscr(\tilde G)$ может быть строгим), как показывает простой пример в левом фрагменте на Рис.~\ref{fig:uncover}  Здесь отношения предпочтений устроены так: $a<b$, $b<c$, $c<d$, $d<e<a$, и можно проверить, что имеется единственный стабильный матчинг, а именно, $M=\{b,d\}$. Тогда вершина  $v$ не покрыта $M$, однако при ее удалении (вместе с ребром $e$) появляется новый стабильный матчинг $\{a,c\}$. 

 \begin{figure}[htb]
\begin{center}
\includegraphics[scale=0.8]{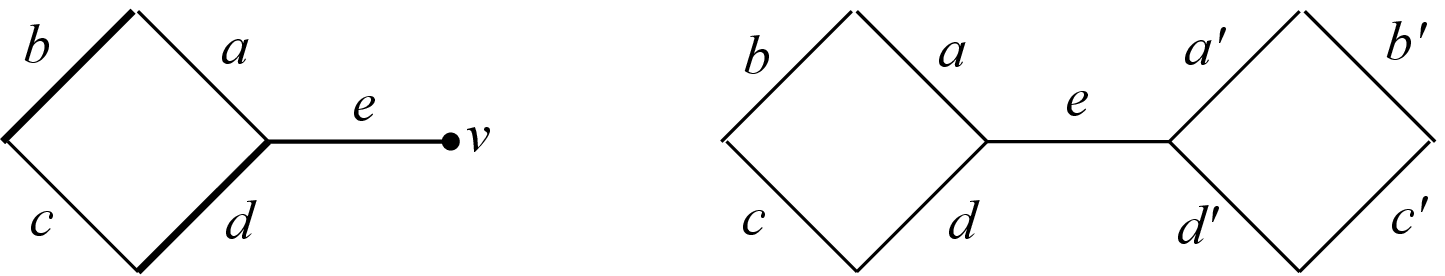}
\end{center}
\vspace{-0.4cm}
\caption{Два примера}
 \label{fig:uncover}
\end{figure}

Таким образом, при исследовании множества стабильных матчингов для $G$ мы, вообще говоря, не можем выкидывать из рассмотрения непокрытые вершины, оставляя только подграф $\tilde G=(\tilde V=(\tilde I\sqcup \tilde J), \tilde E)$ (где доли $\tilde I=I\cap \tilde V$ и $\tilde J=J\cap\tilde V$ имеют одинаковый размер, и все стабильные матчинги совершенные). По похожим причинам в случае отсутствия непокрытых вершин удаление ребра, не входящего ни в один стабильный матчинг, может повлечь появление нового стабильного матчинга. (В правом фрагменте на Рис.~\ref{fig:uncover} изображено расширение предыдущего графа с аналогичными предпочтениями для новых ребер, а именно, $a'<b'$, $b'<c'$, $c'<d'$ и $d'<e<a'$. Здесь имеются три стабильных матчинга, а именно, $\{a,c,b'd'\}$, $\{b,d,a',c'\}$ и $\{b,d,b',d'\}$, однако при удалении непокрытого ребра $e$ появляется четвертый стабильный матчинг $\{a,c,a',c'\}$, для которого ранее имелось блокирующее ребро $e$.)
\medskip

\noindent II. Далее рассмотрим упорядоченную пару $(M,L)$ стабильных матчингов в $G$.  Из Предложения~\ref{pr:cover_vert} следует, что граф $\Delta_{M,L}$, индуцированный $M\triangle L$,  распадается на некоторое множество $\Cscr=\Cscr(M,L)$ непересекающихся чередующихся циклов. Считая $G$ ориентированным от $I$ к $J$, будем полагать каждый цикл  $C=(v_0,e_1,v_1,\ldots,e_k,v_k=v_0)\in\Cscr$  направленным согласно ориентации ребер в $L$, т.е. прямые ребра в $C$ принадлежат $L$, а обратные принадлежат $M$. Ввиду~\refeq{3path}, все предпочтения вдоль $C$  ``направлены в одну сторону'', а именно,

\begin{numitem1} \label{eq:cycleC}
для цикла $C=e_1e_2\cdots e_k\in\Cscr(M,L)$ (применяя обозначение $C$ через ребра), если $e_i<e_{i+1}$ выполнено для некоторого $i$, то это выполняется для всех $i=1,\ldots,k$ (полагая $e_{k+1}:=e_1$).
 \end{numitem1}
 
В этом случае скажем, что цикл $C$ \emph{повышающий}, или \emph{правый}, относительно $M$. Иначе $C$ считается \emph{понижающим}, или \emph{левым}. Обозначим $\Cscr^+(M,L)$ и $\Cscr^-(M,L)$ множества правых и левых циклов для $(M,L)$, соответственно.
Если матчинг $M'$ получается из $M$ заменой ребер вдоль цикла $C$ будем писать $\crepl{M}{C}{M'}$ или $M'=\replac(M,C)$, и аналогично, будем писать $\crepl{L}{C}{L'}$ или $L'=\replac(L,C)$ для замены вдоль $C$ в $L$. Заметим, что если цикл $C$ правый относительно $M$, то по сравнению с $M$ матчинг $M'$ является менее предпочтительным для всех вершин в $I\cap C$ (``men'') и более предпочтительным для всех вершин в $J\cap C$ (``women''), а для $L$ и $L'$ поведение противоположное. (Иначе говоря, при замене вдоль повышающего цикла мы как бы удаляемся от $\Mmin$ и приближаемся к $\Mmax$.) Повышающий цикл показан на Рис.~\ref{fig:cycle}.

 \begin{figure}[htb]
\begin{center}
\includegraphics[scale=0.9]{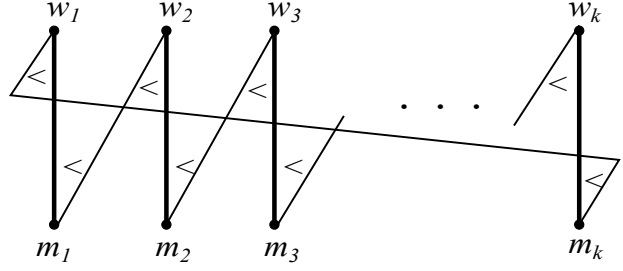}
\end{center}
\vspace{-0.5cm}
\caption{Повышающий цикл для $(M,L)$. Матчинг $M$ изображен жирным, а $L$ -- тонким. Вершины $m_i$ принадлежат доле $I$.}
 \label{fig:cycle}
\end{figure}

Можно было бы ожидать, что матчинг $M'=\replac(M,C)$ должен быть стабильным. Однако это может быть неверным. Например, рассмотрим граф как в правом фрагменте на Рис.~\ref{fig:uncover} (с указанными выше предпочтениями) и возьмем стабильные матчинги $M=\{b,d,a',c'\}$ и $L=\{a,c,b',d'\}$. Тогда ребра $a,b,c,d$ порождают цикл $C$ в $\Delta_{M,L}$, но матчинг $M'=\{a,c,a',c'\}$, получающийся в результате замены ребер в $M$ вдоль $C$, не является стабильным.

Тем не менее, стабильность сохраняется при замене сразу во всех циклах в $\Cscr^+(M,L)$ или в $\Cscr^-(M,L)$. В этих случаях будем писать $M'=\replac(M,\Cscr^+(M,L))$ и $M'=\replac(M,\Cscr^-(M,L))$, соответственно.

\begin{lemma} \label{lm:replML} {\rm \cite{knu}}
Если $M,L$ -- стабильные матчинги, то $M'=\replac(M,\Cscr^+(M,L))$ -- тоже стабильный матчинг. Аналогично для $\replac(M,\Cscr^-(M,L))$.
 \end{lemma}
 
\begin{proof}
Очевидно, $M'$ является матчингом. Предположим, $M'$ имеет блокирующее ребро $e=mw$ (где $m\in I$). Легко видеть, что это в принципе возможно только если одна из вершин $m,w$ принадлежит правому циклу, а другая -- левому циклу в $\Cscr(M,L)$. Без ограничения общности можно считать, что $m$ принадлежит циклу $C\in\Cscr^+(M,L)$, а $w$ -- циклу  $C'\in\Cscr^-(M,L)$. Пусть вершина $m$ инцидентна  ребрам $a\in M$ и $b\in L$ в $C$, а $w$ инцидентна ребрам $c\in M$ и $d\in L$ в $C'$. Тогда $a<_m b$ и $c<_w d$ (учитывая $m\in I$ и $w\in J$). При преобразовании $M\mapsto M'$ в матчинг $M'$ попадают ребра $b$ и $c$, и должно выполняться $e<_m b$ и $e<_w c$ (так как по предположению $e$ блокирует $M'$). Но тогда получаем $e< d$ (ввиду $c<d$), и, следовательно, $e$ блокирует $L$; противоречие.
\end{proof}

Это позволяет определить решетку на стабильных матчингах в $G$. Заметим, что для $M,L\in\Mscr(G)$ заменить в $M$ ребра вдоль всех повышающих циклов для $M$ -- это все равно, что сделать в $L$ замены вдоль всех повышающих циклов для $L$, т.е. $\replac(M,\Cscr^+(M,L))=\replac(L,\Cscr^+(L,M))$. Аналогично $\replac(M,\Cscr^-(M,L))=\replac(L,\Cscr^-(L,M))$. Ввиду Предложения~\ref{pr:GSalg}, в $\Cscr(M,L)$ нет понижающих циклов при $M=\Mmin$, и нет повышающих циклов при $M=\Mmax$.

 \begin{prop} \label{pr:lattice} {\rm \cite{knu}}
Для различных $M,L\in \Mscr(G)$ положим $M\prec L$, если для каждой пары ребер $e\in M$ и $e'\in L$, инцидентных вершине в $\tilde I$ ($=I\cap \tilde V$) выполняется $e\le e'$ (давая противоположные соотношения для вершин в $\tilde J$). Тогда $\prec$ определяет дистрибутивную решетку на $\Mscr(G)$ с минимальным элементом $\Mmin (G)$ и максимальным элементом $\Mmax (G)$, в которой для $M,L\in\Mscr(G)$ точной нижней гранью $M\wedge L$ является $\replac(M,\Cscr^-(M,L))=\replac(L,\Cscr^-(L,M))$, а точной верхней гранью $M\vee L$ --  $\replac(M,\Cscr^+(M,L))=\replac(L,\Cscr^+(L,M))$. \hfill$\qed$
 \end{prop}
 
%----------------------- Sec. 3

\section{Ротации}  \label{sec:rotat}

Благодаря решеточности множества $\Mscr(G)$, можно, имея два или более стабильных матчингов, конструировать новые, делая переназначения в соответствующих циклах для пар матчингов, как указано в Лемме~\ref{lm:replML}. Другой метод, связанный с понятием ротации, позволяет порождать новые стабильные матчинги, исходя из одиночных элементов в $\Mscr(G)$. Это понятие было введено Ирвингом и Лейтером~\cite{IL} для задачи о стабильном марьяже, основываясь на концепции ``цикла типа все или ничего'' (all-or-nothing cycle) из работы Ирвинга~\cite{irv}, посвященной задаче о стабильных матчингах в общем графе (т.н. ``stable roommates problem''). Впоследствии понятие ротации было распространено и на другие задачи о стабильности, такие как задачи о стабильных b-матчингах, распределениях (allocations) и др. (см., например,~\cite{DM}). Заметим, что хотя в~\cite{IL} рассматривался случай полного двудольного графа $G\simeq K_{n,n}$, конструкции и результаты без особого труда переносятся на произвольный случай, и как выше, мы далее будем рассматривать произвольный двудольный граф $G=(V=I\sqcup J,\, E,<)$.

Пусть $M$  -- стабильный матчинг в $G$.
\medskip

\noindent\textbf{Определения.} 
Скажем, что ребро $a=mw\in E-M$ (где $m\in I$) \emph{допустимое}, если в $M$ есть ребро $e$, инцидентное $m$, и это ребро более предпочтительное, чем $a$ (т.е. $e<_m a$), и при этом либо вершина $w$ непокрытая, либо $M$ имеет ребро $e'$, инцидентное $w$, и это ребро менее предпочтительное, чем $a$ (т.е. $a<_w e'$).  Если для вершины $m\in I$ множество допустимых ребер в $\delta(m)$ непустое, то самое первое (наиболее предпочтительное) из них назовем \emph{активным}. Обозначим $A$ множество активных ребер для $M$. Подграф в $G$, индуцированный  множеством ребер $M\cup A$, обозначим $\Gamma=\Gamma(M)$ и назовем \emph{активным графом}. 
\smallskip

Учитывая то, что из каждой вершины в $I$ исходит не более одного активного ребра (в то время как в вершину в $J$ может входить несколько активных ребер), всякий цикл в $\Gamma$ должен быть чередующимся относительно $M$ и $A$. Кроме того, каждая компонента $K$ в $\Gamma$ либо является деревом, либо содержит ровно один цикл. (Действительно, в противном случае в $K$ имелась бы пара циклов и соединяющий их простой $u$--$v$ путь $P$ (все вершины которого, кроме $u,v$, не принадлежат циклам). Концевые ребра этого пути должны принадлежать $A$, и по крайней мере одна из вершин $u,v$ должна находиться в $I$. Эта вершина имеет два инцидентных ребра в $A$ (одно на $P$, другое на цикле); противоречие.) При наличие цикла мы называем компоненту $K$ \emph{цикло-содержащей}, а сам цикл обозначаем через $C_K$ и называем \emph{ротацией} для $M$. (Заметим, что в~\cite{IL} ротацией называется не сам цикл, а его пересечение с $M$.)  Ребра в $C_K$, принадлежащие $M$, назовем \emph{матчинговыми}, а остальные -- \emph{активными}. При замене ребер вдоль ротации $C$ возникает матчинг, который по сравнению с $M$ менее предпочтителен для вершин в $I\cap C$ и более предпочтительный для вершин в $J\cap C$. (Для операции замены вдоль $C$ в~\cite{IL} применяется термин ``rotation elimination''.) Когда $C$ -- цикл в $\Gamma(M)$, мы также можем говорить, что матчинг $M$ \emph{допускает} ротацию $C$.
 \medskip
 
\noindent\textbf{Пример.} На Рис.~\ref{fig:three} изображены три стабильных матчинга $M_1,M_2,M_3$ (выделенных жирно) в графе $G=(V,E,<)$ как на правом фрагменте Рис.~\ref{fig:uncover}. Здесь доля $I$ составлена вершинами $1,2,3,4$, а доля $J$ -- вершинами $x,y,v,w$, и предпочтения заданы как и раньше, а именно:
   \begin{gather*}
  a<_y b,\;\; b<_1 c,\;\; c< _x d,\;\; d<_2 e<_2 a, \\
  a'<_3 b',\;\; b'<_w c',\;\; c'< _4 d',\;\; d'<_v e<_v a'.
   \end{gather*}
Для матчинга $M_1$ все нематчинговые ребра $a,c,e,b',d'$ являются допустимыми, но из них активными оказываются только $c,e,b',d'$ (так как $e<_2 a$). Поэтому подграф $\Gamma(M_1)$ порожден $M_1\cup\{c,e,b',d'\}$, имеет ровно одну компоненту, и она является цикло-содержащей с циклом $C=a'b'c'd'$. При замене вдоль ротации $C$ получаем матчинг $M_2$ (являющийся стабильным согласно Предложению~\ref{pr:rotat-stab} ниже). Для $M_2$ допустимыми ребрами в $E-M_2$ являются только $a$ и $c$ (а $e,a',c'$ -- не допустимые, ввиду $d'<_v e,a'$ и $b'<_w c'$, учитывая $b',d'\in M_2$ и $v,w\in J$). Поэтому активными ребрами являются $a$ и $c$, и граф $\Gamma(M_2)$ состоит из трех компонент, порожденных $\{a,b,c,d\}$, $\{b'\}$ и $\{d'\}$. Первая из них образует цикл $D=abcd$. Наконец, при замене вдоль ротации $D$ получаем стабильный матчинг $M_3$. Для него ни одно из ребер в $E-M_3$ не является допустимым, и $\Gamma(M_3)$ составлено просто ребрами в $M_3$. Можно видеть, что $M_1=\Mmin$ и $M_3=\Mmax$.
 \medskip

 \begin{figure}[htb]
\begin{center}
\includegraphics[scale=0.8]{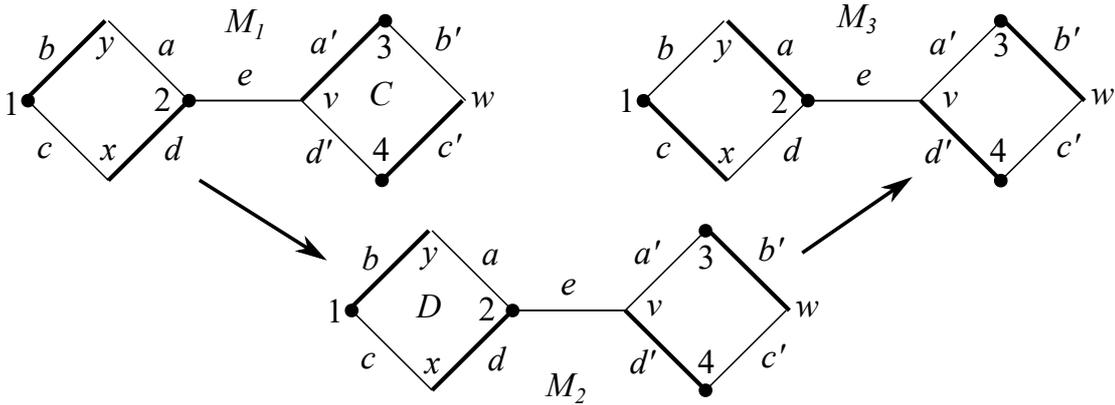}
\end{center}
\vspace{-0.5cm}
\caption{Граф с тремя стабильными матчингами, выделенными жирно: $M_1=\Mmin$ (слева), $M_2$ (снизу) и $M_3=\Mmax$ (справа).}
 \label{fig:three}
\end{figure}

Следующие два утверждения демонстрируют важные свойства $\Gamma(M)$.
%Условимся ориентировать цикл $C_K$ в направлении ребер в $M$.

\begin{prop} \label{pr:rotat-stab} {\rm\cite{IL}} 
Для любого цикла (ротации) $C$ в $\Gamma(M)$ матчинг $M'=\replac(M,C)$ является стабильным.
 \end{prop}

\begin{proof}
~Оба $M$ и $M'$ покрывают одно и то же множество вершин $\tilde V$. Предположим, что $M'$ допускает блокирующее ребро $e=mw\in E-M'$.  Заметим, что $e\notin M$ (учитывая то, что ребра в $C\cap M$ не могут быть блокирующими для $M'$). Возможны два случая.
 \smallskip
 
\noindent\emph{Случай 1}: ~$w\in \tilde V$. Тогда вершина $w$ принадлежит ребрам $a\in M'$ и $b\in M$, и мы имеем $e<_w a$ (ввиду блокирования) и $a\le_w b$ (так как в случае $a\ne b$ ребро $a$ лежит в $C$ и является допустимым). Следовательно, $e<_w b$. Тогда $m\in \tilde V$ (иначе $e$ было бы блокирующим для $M$).

Пусть вершина $m$ принадлежит ребрам $c\in M'$ и $d\in M$. Тогда $e<_m c$, и должно выполняться $d<_m e$ (иначе $e$ было бы блокирующим для $M$, учитывая $e<_w b$). Следовательно, $e$ является допустимым для $M$ и более предпочтительным, чем ребро $c$, которое принадлежит $A$; противоречие.
\smallskip

\noindent\emph{Случай 2}: ~$w\notin \tilde V$. Тогда $m\in \tilde V$. Пусть $m$ принадлежит ребрам $c\in M'$ и $d\in M$. Так как $e$ является блокирующим для $M'$, но не для $M$, имеем $d<_m e<_m c$. Но тогда ребро $e$ допустимое и более предпочтительное, чем активное ребро $c$; противоречие.
\end{proof}

Говоря далее о чередующихся (относительно $M$ и $A$) путях и циклах в $\Gamma(M)$, мы считаем, что в них выбрано такое направление, чтобы ребра из $A$ были прямыми, а ребра из $M$ обратными.

\begin{prop} \label{pr:tree-comp}
Пусть $e$ -- ребро в $M$, принадлежащее древесной компоненте $K$ в $\Gamma(M)$. Тогда для любого стабильного матчинга $L\in\Mscr(G)$ либо $e\in L$, либо $e$ содержится в понижающем цикле в $\Cscr(M,L)$.
 \end{prop}
 
\begin{proof}
~Предположим, ребро $e=mw\in M$ из древесной компоненты $K$ находится в повышающем цикле $C\in\Cscr^+(M,L)$ для некоторого $L\in \Mscr(G)$. Пусть $C$ имеет вид $e_1e_2\cdots e_k$ (применяя обозначение через ребра и учитывая направленность $C$), и пусть $e:=e_1\in M$. Возьмем максимальный чередующийся путь $P=p_1p_2\cdots  p_r$ (через ребра) в $\Gamma(M)$, начиная с $p_1=e$. Можно считать, что все ребра в $P\cap M$, кроме $p_1$, не лежат в повышающих циклах в $\Cscr(M,L)$. Ребра $e_1$ и $e_2=mw'$ инцидентны вершине $m\in I$, и поскольку $C$ повышающий, $e_1<_m e_2$ и $e_2<_{w'} e_3$. Кроме того, $e_1,e_3\in M$ и $e_2\in L$. Следовательно, $e_2$ -- допустимое ребро для $M$, и в $\delta(m)$ есть активное ребро $a=mv$, для которого $e_1<_m a\le_m e_2$. Это $a$ совпадает с ребром $p_2$ в $P$. Кроме того, $a\ne e_2$ (иначе $e_3\in P$ и $e_3\in C \cap M$, вопреки условию на $p_1=e$). 

Мы утверждаем, что ребро $a=p_2$ -- блокирующее для $L$. Это следует из $a<_m e_2\in L$, если $r=2$ (в этом случае концевая вершина $v$ в $P$ является непокрытой), а также если $r\ge 3$ и $p_3\in L$ (так как  $a<_v p_3$, ввиду $p_3\in M$). Наконец, пусть $p_3\notin L$. Тогда $p_3$ принадлежит  понижающему циклу $C'\in\Cscr^-(M,L)$. Для ребра $b$ в $C'$, инцидентного $v$ и отличного от $p_3$, выполняется $b\in L$ и $p_3<_v b$ (так как $C'$ понижающий). Тогда из $a<_vp_3<_vb$ и $a<_m e_2\in L$ следует, что $a$ блокирует $L$; противоречие.
\end{proof}
 
Ясно, что для матчинга $M'=\replac(M,C)$, полученного заменой вдоль ротации $C$ в $\Gamma(M)$, выполняется $M\prec M'$, где $\prec$ -- частичный порядок в решетке на $\Mscr(G)$ (см. Предложение~\ref{pr:lattice}). Как было установлено в~\cite{IL}, начиная с $\Mmin$ и используя ротации, можно исчерпать все $\Mscr(G)$. 
\medskip

\noindent\textbf{Определение.} ~Назовем \emph{трассой} последовательность стабильных матчингов $\tau=(M_0,M_1,\ldots,M_N)$ в $G$ такую, что $M_0=\Mmin$, $M_N=\Mmax$, и каждое $M_i$ ($1\le i\le N$) получается из $M_{i-1}$ заменой вдоль одной ротации $C_i$ в $\Gamma(M_{i-1})$. Последовательность ротаций $C_1,\ldots,C_N$ обозначим $\Rscr(\tau)$.
 
\begin{prop} \label{pr:trass}
Трассы покрывают всю решетку $(\Mscr(G),\prec)$.
 \end{prop}
 
\begin{proof}
Достаточно показать, что если матчинги $M,L\in\Mscr$ удовлетворяют $M\prec L$, то найдется ротация $C$ для $M$ такая, что для $M'=\replac(M,C)$ справедливо $M'\preceq L$.  Чтобы построить такую ротацию, возьмем вершину $m\in \tilde I$, для которой инцидентные ребра $e=mw\in M$ и $f=mv\in L$ различны. Тогда $e<_m f$ (ввиду $M\prec L$), и ребро $f$ допустимое для $M$ (так как имеется ребро $b\in M$, инцидентное $v$, и для него выполняется $f<_{v} b$, ввиду $M\prec L$ и $w\in \tilde J$). Поэтому в $\delta(m)$ есть активное ребро $a=mw'$, и выполняется $a\le_m f$. Заметим, что $w'\in \tilde V$ (иначе должно быть $a\notin L$; тогда $a<_m f$, и $a$ блокирует $L$).

Следовательно,  имеется ребро $e'=m'w'\in M$, а также ребро $f'\in L$, инцидентное $m'$. Мы утверждаем, что $f'\ne e'$. Действительно, в случае $f'=e'$, мы имели бы $a<_{w'} f'$ и $a<_m f$, и, следовательно, $a$ блокировало бы $L$. 

Теперь из $f'\ne e'$ получаем $e'<_{m'} f'$, аналогично $e<_m f$ в начальной ситуации. Продолжая построение, мы получаем ``неограниченный'' чередующийся путь $P$ в $\Gamma(M)$ с последовательностью ребер $e=e_1=m_1w_1$, $a=a_1=m_2w_1$, $e'=e_2=m_2w_2$, $a_2=m_2w_3,\ldots$\,, и для каждого $i$ имеется ребро $f_i\in L$, инцидентное  $m_i$ и отличное от $e_i$. Тогда $e_i<_{m_i}a_i\le_{m_i} f_i$, и $P$ содержит искомую ротацию $C$.
\end{proof}

Назовем \emph{рангом} матчинга $M$ сумму $\rho(M)$ порядковых номеров его ребер $mw$ в множествах $\delta(m)$ (упорядоченных согласно $<_ m$), где $m\in I$. Очевидно, $\rho(M)\le |E|$, и если $M\prec L$, то $\rho(M)<\rho(L)$. Поэтому
  \begin{numitem1} \label{eq:trass-length}
любая трасса $\tau$ в $G=(V,E,<)$ содержит не более $|E|$ матчингов; следовательно, $|\Rscr(\tau)|<|E|$.
 \end{numitem1}
 
Из Предложений~\ref{pr:rotat-stab}--\ref{pr:trass} мы также получаем

\begin{corollary} \label{cor:min-max}
{\rm (i)} Граф $\Gamma(\Mmax(G))$ является лесом (и ротации для $\Mmax(G)$ отсутствуют).

{\rm(ii)} Ребра минимального стабильного матчинга $M=\Mmin(G)$, находящиеся в древесных компонентах активного графа $\Gamma(M)$, принадлежат всем стабильным матчингам в $\Mscr(G)$.

{\rm(iii)} $G$ имеет единственный стабильный матчинг тогда и только тогда, когда $\Gamma(\Mmin)$ является лесом.
 \end{corollary}

Следующее утверждение, установленное в~\cite{IL}, несмотря на относительную простоту доказательства, является наиболее существенным в области ротаций.

\begin{prop} \label{pr:C-tau}
Множество ротаций $\Rscr(\tau)$ для всех трасс $\tau$ в $(\Mscr(G),\prec)$ одно и то же.
 \end{prop}

\begin{proof}
~Для $M\in\Mscr(G)$ определим $\Tscr(M)$ как множество отрезков трасс (``подтрасс''), начинающихся с $M$ и оканчивающихся в $\Mmax$. Для такой подтрассы $\tau\in\Tscr(M)$ соответствующее множество ротаций обозначим через $\Rscr(\tau)$ и скажем, что матчинг $M$ \emph{особый}, если в $\Tscr(M)$ имеются подтрассы $\tau$ и $\tau'$ с $\Rscr(\tau)\ne \Rscr(\tau')$. Надо доказать, что $\Mmin$ не является особым.

Предположим, что это не так, и рассмотрим особый матчинг $M$ максимальной высоты, т.е. такой, что матчинг $L$ не является особым, если $M\prec L$. Пусть $\Cscr(M)$ -- множество ротаций в $\Gamma(M)$; тогда для любой подтрассы $\tau=(M=M_1,M_2,\ldots,M_k=\Mmax)$ матчинг $M_1$ получается из $M$ заменой вдоль некоторой ротации $C\in \Cscr(M)$. Ввиду максимальности $M$, найдутся ротации $C',C''\in \Cscr(M)$ такие, что матчинги $M'=\replac(M,C')$ и $M''=\replac(M,C'')$ неособые, и множества ротаций $\Rscr(\tau')$ и $\Rscr(\tau'')$ различны для подтрасс $\tau',\tau''\in \Tscr(M)$ таких, что $\tau'$ проходит через $M'$,  а $\tau''$ проходит через $M''$. 

Теперь заметим, что поскольку ротации $C'$ и $C''$ не пересекаются, то $C'$ является ротацией в $\Gamma(M'')$, а $C''$ -- ротацией в $\Gamma(M')$. Поэтому в $\Tscr(M)$ есть подтрассы $\tau',\tau''$ такие, что $\tau'$ начинается с $M,M',\replac(M',C'')$, и $\tau''$ начинается с $M,M'',\replac(M'',C')$, а после матчинга $\replac(M',C'')=\replac(M'',C')$ подтрассы $\tau'$ и $\tau''$ совпадают. Но тогда $\Rscr(\tau')=\Rscr(\tau'')$; противоречие.
\end{proof} 

Это множество ротаций $\Rscr(\tau)$, не зависящее от трасс $\tau\in\Tscr(\Mmin)$, обозначим $\Rscr_G$. Ввиду~\refeq{trass-length}, получаем

\begin{corollary} \label{cor:RscrG}
Число ротаций в $\Rscr_G$ не превосходит $|E|$.
\end{corollary} 

Приведем еще одно полезное свойство:

\begin{numitem1} \label{eq:different}
множества матчинговых ребер двух различных ротаций не пересекаются (и поэтому общее число ребер в ротациях не превосходит $2|E|$).
 \end{numitem1}
Действительно, если $e=mw$ -- матчинговое ребро в ротации $C$, где $m\in I$, то при трансформации в $C$ новым матчинговым ребром в $\delta(m)$ становится активное ребро $a=mv$ в $C$; при этом выполняется $e<_m a$. При дальнейшем движении вдоль трассы матчинговые ребра в $\delta(m)$ могут перемещаться только ``вправо'' (по аналогичным причинам), и возврата к ребру $e$ не происходит. 
 
В заключение этого раздела скажем о компонентах графа объединения стабильных матчингов $\Sigma_G$. Для этого обозначим через $M^0$ множество ребер минимального матчинга $\Mmin$, принадлежащих древесным компонентам в  $\Gamma(\Mmin)$, и обозначим через $U_G$ подграф в $G$, являющийся объединением всех ротаций в $\Rscr_G$. Как указано в Следствии ~\ref{cor:min-max}(ii), $M^0$ принадлежит всем матчингам в $\Mscr(G)$ (поскольку активный граф $\Gamma(\Mmin)$ не имеет понижающих циклов, и учитывая Предложение~\ref{pr:tree-comp}); отсюда получаем, что каждое ребро в $M^0$ образует компоненту в $\Sigma_G$. В то же время очевидно, что каждая ротация целиком лежит в $\Sigma_G$, и поэтому $U_G\subseteq \Sigma_G$. Более того, так как каждый стабильный матчинг получается из $\Mmin$ в результате последовательности ротационных трансформаций, то справедливо

\begin{corollary} \label{cor:SigmaG}
$\Sigma_G$ является объединением $U_G=\cup\{C\in \Rscr_G\}$ и $\Mmin$. Каждое ребро в $M^0$ и каждая компонента в $U_G$ является компонентой в $\Sigma_G$.
\end{corollary}

Таким образом, $\Sigma_G$ строится эффективно (за время $O(|V| |E|)$, ввиду сказанного в следующем разделе). А также каждое ребро в $M^0$ и каждая компонента в $U_G$ является компонентой в $\Sigma_G$. В связи с этим можно спросить, есть ли в $\Sigma_G$ другие компоненты? Если да, то ими могут быть только ребра в $\Mmin$, которые находятся в цикло-содержащей компонентах $K$ графа $\Gamma(\Mmin)$, но при этом не входят в объединение ротаций $U_K$. Вопрос о существовании таких компонентах открытый для нас.

%----------------------- Sec. 4

\section{Идеалы посета ротаций и стабильные матчинги}  \label{sec:ideal}

Как было сказано выше, естественный частичный порядок $\prec$ на стабильных матчингах превращает множество $\Mscr(G)$ в дистрибутивную решетку (см. Предложение~\ref{pr:lattice}). Известно, что любая дистрибутивная решетка изоморфна решетке идеалов некоторого посета. Для решетки $(\Mscr(G),\prec)$ Ирвинг и Лейтер~\cite{IL} указали явную конструкцию. (Хотя~\cite{IL} и последующая работа~\cite{ILG} имеют дело с полным двудольным графом $K_{n,n}$, полученные там результаты без большого труда распространяются на произвольный двудольный $G$.) А именно, опираясь на ключевой факт о постоянстве множества ротаций, ассоциированных с трассами (см. Предложение~\ref{pr:C-tau}), а также на определенный  способ задания структуры посета на множестве ротаций $\Rscr_G$, было показано соответствие между стабильными матчингами и идеалами (замкнутыми множествами) в таком посете. Это представление для $\Mscr(G)$ имеет важные применения, в частности, позволяет эффективно решать задачи линейной оптимизации для $\Mscr(G)$ и установить труднорешаемость задачи вычисления числа $|\Mscr(G)|$, о чем будет сказано в разделах~\SEC{polyhedr} и~\SEC{addit}.

Начнем с построения упомянутого посета на $\Rscr_G$. Напомним, что для трассы $\tau=(M_0,M_1,\ldots,M_N)$ мы обозначаем  через $\Rscr(\tau)$ последовательность ротаций $C_1,\ldots,C_N$, где $M_i$ получается из $M_{i-1}$ заменой вдоль $C_i$. 
\medskip

\noindent\textbf{Определение.} Для ротаций $C,D\in\Rscr_G$ скажем, что $C$ \emph{предшествует} $D$, и обозначим это как $C\lessdot D$, если для \emph{каждой} трассы $\tau$ в $(\Mscr(G),\prec)$ ротация $C$ встречается в последовательности $\Rscr(\tau)$ раньше, чем $D$. Иными словами, трансформация матчинга в $D$ никогда не может произойти ранее, чем трансформация в $C$. 
\medskip

Это бинарное отношение антисимметричное и транзитивное и дает частичный порядок на $\Rscr_G$. 

Чтобы объяснить связь с матчингами, рассмотрим $M\in\Mscr(G)$. Пусть $\tau=(M_0,M_1,\ldots,M_N)$ -- трасса, содержащая $M$, скажем, $M=M_i$, и пусть $\Rscr(\tau)=(C_1,\ldots, C_N)$. Тогда $M$ получается из $M_0=\Mmin$ последовательностью трансформаций относительно ротаций $C_1,\ldots,C_i$. Из Предложения~\ref{pr:C-tau} легко следует, что 
 \begin{numitem1} \label{eq:RscrM}
(неупорядоченное) множество $\Rscr_M:=\{C_1,\ldots, C_i\}$ не зависит от трассы $\tau$, проходящей через $M$;  аналогично для множества $\Rscr^+_M:=\{C_{i+1},\ldots, C_N\}$, и для множества $\Rscr_{M,M'}:=\{C_{i+1},\ldots, C_j\}$, где $M'=M_j$ для $j\ge i$.
  \end{numitem1}

В соответствии с определением $\lessdot$ отношение $C_j\lessdot C_k$ может выполняться, только если $j<k$. Более того, $\Rscr_M$ образует \emph{замкнутое множество}, или \emph{идеал}, в посете $(\Rscr_G,\lessdot)$ (т.е. из $C\lessdot D$ и $D\in \Rscr_M$ следует $C\in \Rscr_M$). Это дает инъективное отображение $\beta$ множества $\Mscr(G)$ в множество идеалов в $(\Rscr_G,\lessdot)$. В действительности, $\beta$ является биекцией, как показано в~\cite[Th.~4.1]{IL}.

\begin{prop} \label{pr:ideal-match}
Отображение $M\mapsto \Rscr_M$ устанавливает изоморфизм между решеткой  стабильных матчингов $(\Mscr(G),\prec)$ и решеткой $\Iscr(G)$ идеалов в $(\Rscr_G,\lessdot)$. 
  \end{prop} 
 \begin{proof} ~Рассмотрим стабильные матчинги $M$ и $L$, и пусть $E'$ и $E''$ -- множества ребер, принадлежащих циклам в $\Cscr^+(M,L)$ и $\Cscr^-(M,L)$, соответственно. Очевидно, подграфы, порожденные этими множествами, не пересекаются.  Из доказательства Предложения~\ref{pr:trass} следует, что $E'$ является объединением множества $R'$ ротаций, требуемых для получения $M\vee L$  из  $M$ (или, эквивалентно, $L$ из $M\wedge L$), а $E''$ -- объединением множества $R''$ ротаций, требуемых для получения $M\vee L$  из  $L$ (эквивалентно, $M$ из $M\wedge L$). Следовательно, ротации $R'$ и $R''$ ``перестановочны'': чтобы получить матчинг $M\vee L$ из $M\wedge L$ можно применять сначала  $R'$, а затем $R''$, или наоборот. Это дает $\beta(M)\cap \beta(L)=\beta(M\wedge L)$ и $\beta(M)\cup \beta(L)=\beta(M\vee L)$ (т.е. $\beta$ определяет гомоморфизм решеток). Теперь утверждение легко следует из того факта, что для ротаций $C,D\in\Rscr_G$ не выполняется $C\lessdot D$ тогда и только тогда, когда найдется стабильный матчинг $M$ такой, что $D\in\Rscr_M\not\ni C$.
\end{proof}

Как следствие, $\Mscr(G)$ биективно множеству $\Ascr(G)$ анти-цепей в $(\Rscr_G,\lessdot)$. (Напомним, что \emph{анти-цепь} в посете $(P,<)$ -- это максимальное по включению множество $A$ попарно несравнимых элементов. Оно определяет идеал $\{p\in P\colon p\le a~ \exists a\in A\}$, и определяется идеалом как множеством его максимальных элементов.)
\smallskip

Мы завершаем этот раздел, объясняя как эффективно проверять отношение $\lessdot$. Заметим, что $C,D$ не связаны этим отношением, если они являются ротациями для одного и того же стабильного матчинга $M$ (и, следовательно, трансформации в $C$ и $D$ могут производиться в произвольном порядке); в этом случае $C$ и $D$ лежат в разных компонентах в $\Gamma(M)$, и $V_C\cap V_D=\emptyset$. С другой стороны, если $C$ и $D$ имеют общую вершину $v$, то они сравнимы по $\lessdot$.

Действительно, для $C$ и $D$ матчинговые ребра, инцидентные $v$, различные, и  порядок следования $C$ и $D$ для любой трассы один и тот же и определяется порядком $<_v$ для этих ребер (см. объяснение свойства~\refeq{different}).  Вообще, множество ротаций $\Rscr^v$, содержащих фиксированную вершину $v$, упорядочено по $\lessdot$  (скажем, $\Rscr^v=(C_1\lessdot C_2\lessdot\cdots \lessdot C_k)$), и этот порядок легко определяется, так как он согласуется с порядком в $<_v$, если $v\in I$, и обратным порядком, если $v\in J$. 

Есть еще одна локальная причина того, что две ротации $C,D$ должны быть сравнимы по $\lessdot$. А именно,
  \begin{numitem1} \label{eq:second_reas}
предположим,  $C$ содержит матчинговое ребро $e=mw$ и активное ребро $vw$, и $D$ содержит матчинговое ребро $e'=m'w'$, для которых: $b=m'w$ -- ребро в $G$, не принадлежащие никаким ротациям и являющимся первым после $e'$ ребром в $\delta(m')$, и при этом выполняется: $a<_w b<_w e$; тогда $C\lessdot D$.
  \end{numitem1}
  
Действительно, ввиду $a<_wb<_w e$, после трансформации в  $C$ ребро $b$ становится недопустимым. С другой стороны, если бы $D$ предшествовало $C$ в какой-то трассе, то именно $b$ было бы активным ребром в $\delta(m')$ и вошло бы в ротацию. 

Пример в разд.~\SEC{rotat} именно таков: здесь после замены вдоль ротации $C$ ребро $e=2v$ становится недопустимым, ввиду $d'<_v e<_v a'$ и $d,d'\in M_2$ (при этом $e$ -- первое после $d$ ребро в $\delta(2)$). В этом Примере посет состоит из двух ротаций $C,D$ при соотношении $C\lessdot D$, и есть ровно три идеала: $\emptyset$, $\{C\}$ и $\{C,D\}$, отвечающие стабильным матчингам $M_1=\Mmin$, $M_2$ и $M_3=\Mmax$ (а $\{D\}$ -- не идеал).
\smallskip

Далее, мы знаем, что для нахождения множества ротаций $\Rscr_G$ достаточно определить $\Mmin$ и затем построить произвольную трассу, делая трансформации на ротациях в текущих матчингах. (Это выполняется естественным алгоритмом за время $O(|V| |E|)$. В~\cite{gus} предложен алгоритм формирования $\Rscr_G$ за время $O(|V|^2)$.) 

В свою очередь в~\cite[Sec.~5]{ILG} был установлен важный фокт, что для эффективного описания порядка $\lessdot$, действительно достаточно рассмотреть пересечения ротаций и учесть~\refeq{second_reas}. А именно, образуем ориентированный граф $H=(\Rscr_G,\Escr)$, ребрами которого являются пары ротаций $(C,D)$ такие, что либо $V_C\cap V_D\ne\emptyset$ и $C\lessdot D$, либо выполняется~\refeq{second_reas} (этот  граф легко строится за время $O(|V| |E|)$; с некоторой изобретательностью время можно уменьшить до $O(|E|)$). Можно проверить, что

 \begin{numitem1} \label{eq:conseqv}
граф $H$ содержит ребро $(C,D)$ тогда и только тогда, когда найдется стабильный матчинг $M$ такой, что $M$ допускает ротацию $C$, но не $D$, а после трансформации $M$ относительно $C$ полученный стабильный матчинг $M'$ уже допускает $D$.
  \end{numitem1}

Граф $H$ ациклический, и если какая-либо вершина $D$ в $H$  достижима ориентированным путем из $C$, то выполняется $C\lessdot D$. Верно и обратное.

\begin{prop} \label{pr:lessdot}
Если $C\lessdot D$, то в $H$ имеется ориентированный путь из $C$ в $D$. Следовательно, отношение достижимости для вершин в $H$ совпадает с $\lessdot$.
 \end{prop}
 
 \begin{proof}
Пусть $\Iscr'$ -- множество идеалов в посете для $H$; тогда $\Iscr'\supseteq\Iscr(G)$. Надо доказать, что $\Iscr'=\Iscr(G)$. Предположим, это не так, т.е. имеется $X\in \Iscr'$, не являющийся идеалом в $(\Rscr_G,\lessdot)$. Пусть при этом $X$ выбрано так, чтобы число элементов $|X|$ было максимальным. Также выберем идеал $R\in \Iscr(G)$ такой, что $R\subset X$, и при этом $|R|$ максимально.
Согласно Предложению~\ref{pr:ideal-match}, $R=\Rscr_M$ для некоторого стабильного матчинга $M$. Возьмем какую-либо ротацию $С$ для $M$. Ввиду максимальности $R$, элемент $C$ должен быть вне $X$. Тогда $R':=R\cup\{C\}$ -- идеал в $(\Rscr_G,\lessdot)$ (соответствующий матчингу $M'$, получаемому из $M$ заменой вдоль $C$). Также $X':=X\cup \{C\}\in \Iscr'$. 

Ввиду максимальности $X$, имеем $X'\in \Iscr(G)$. Поскольку $X'\supset R'$, и оба $X',R'$ -- идеалы в $(\Rscr_G,\lessdot)$, множество $X'-R'$ содержит элемент (ротацию) $D$ такой, что $R'':=R'\cup \{D\}\in\Iscr(G)$. В то же время $X'':=R\cup\{D\}$ не является идеалом в $(\Rscr_G,\lessdot)$; иначе, ввиду $R\subset X''\subseteq X$, мы получили бы противоречие с максимальностью $R$ (если $X''\ne X$) или с условием на $X$ (если $X''=X$).

Итак, мы пришли к ситуации, когда $R,R',R''\in\Iscr(G)$, $R'=R\cup\{C\}$, $R''=R'\cup\{D\}$, но $R\cup \{D\}\notin \Iscr(G)$. Тогда ротация $D$ не имеет место для матчинга $M$, но появляется сразу после того, как в $M$ производится замена вдоль ротации $C$. Это в точности означает, что граф $H$ содержит ребро $(C,D)$; ср.~\refeq{conseqv}. Но такое ребро идет из $\Rscr_G-X$ в $X$, вопреки тому, что $X$ принадлежит $\Iscr'$. 
 \end{proof}
 
 \noindent\textbf{Замечание} Пусть нам даны стабильные матчинги $M$ и $M'$ в двудольном $G$ такие, что $M\prec M'$, и пусть $[M,M']$ обозначает множество (``интервал'') стабильных матчингов $L$, для которых $M\preceq L\preceq M'$. (Как специальные случаи можно рассматривать $M=\Mmin$ или $M'=\Mmax$.)  Нас интересует возможность ``очистки'' графа $G$ путем удаления части его ребер с тем, чтобы для полученного графа $G'$ множество стабильных матчингов в точности совпадало с интервалом $[M,M']$ для $G$. Чтобы попытаться ответить на данный вопрос, возьмем произвольную трассу $\tau$, проходящую через оба $M,M'$, и рассмотрим множество ротаций, используемых на участке трассы от $M$ до $M'$, т.е. $\Rscr_{M,M'}$, см.~\refeq{RscrM}.  (Трасса $\tau$ и множество $\Rscr_{M,M'}$ строятся эффективно; ср. доказательство Предложения~\ref{pr:trass}.) Тогда интервал $[M,M']$ состоит ровно из тех $L\in\Mscr(G)$, которые получаются из $M$ применением последовательности ротаций из множества $\Rscr_{M,M'}$. Поэтому искомый граф $G'$ должен включать $M$ и объединение всех ротаций из $\Rscr_{M,M'}$. Однако он также должен учитывать структуру графа $H$ (см.~\refeq{conseqv}), чтобы избежать появления ``лишних'' матчингов, использующих ротации из $\Rscr_{M,M'}$. Следовательно, нужно добавить ребра как в~\refeq{second_reas} (вида $m'w$), чтобы гарантировать сохранение указанного там отношения $C\lessdot D$ для соответствующих ротаций $C,D$ в $\Rscr_{M,M'}$. Гипотеза: подграф $G'$ существует и получается из $G$ удалением всех остальных ребер.

 %----------------------- Sec. 5 
 
\section{Оптимальные стабильные матчинги}  \label{sec:optimal}

В этом разделе мы рассматриваем ситуацию, когда двудольный граф $G=(V,E,<)$ дополнительно снабжен вещественной функцией \emph{весов} (или \emph{стоимостей}) ребер $c:E\to \Rset$. Нас интересует \emph{задача о стабильном матчинге минимального веса}:

\begin{numitem1} \label{eq:min-cost}
Найти стабильный матчинг $M\in\Mscr(G)$, минимизирующий общий вес $c(M):=\sum_{e\in M} c(e)$.
\end{numitem1}

(Ввиду того, что все стабильные матчинги в $G$ имеют одинаковый размер, добавление константы к функции $c$ фактически не меняет задачи; поэтому можно считать функцию $c$ неотрицательной. При замене $c$ на $-c$ получаем задачу максимизации веса $c(M)$.)

В важном частном случае этой задачи, известном как задача об \emph{оптимальном стабильном матчинге} (см.~\cite{MW71}), вес $c(e)$ ребра $e=mw$ определяется как
\begin{equation} \label{eq:opt-weight}
c(e):= r_I(e)+r_J(e),
 \end{equation}
где $r_I(e)$ -- порядковый номер ребра $e$ в упорядоченном (согласно $<_m$) списке $\delta(m)$, и аналогично, $r_J(e)$ -- номер в списке $\delta(w)$. (Такая постановка широко обсуждалась в литературе, в частности, в~\cite{knu,pol}. Здесь интересуются матчингом, который ``наиболее благоприятен по суммарным (или средним) предпочтениям для всех персон'' (в то время как ``алгоритм отложенных принятий'' (АОП) дает наилучшее решение только для одного множества из $I$ и $J$).) В~\cite{ILG} задача с весовой функцией как в~\refeq{opt-weight} также называется задачей об \emph{эгалитарном стабильном матчинге}.

Задача~\refeq{min-cost} может быть сформулирована как задача линейного программирования с $0,\pm 1$-матрицей размера $(|V|+|E|)\times |E|$ (о чем будет сказано в следующем разделе), поэтому она может быть решена универсальным сильно полиномиальным алгоритмом (усиленной версией метода эллипсоидов в~\cite{tar}). Однако изложенное в предыдущем разделе представление $\Mscr(G)$ в виде множества идеалов в посете ротаций $(\Rscr_G,\lessdot)$ дает возможность решать~\refeq{min-cost} намного более экономным и наглядным методом. Это было предложено Ирвингом-Лейтером-Гасфилдом в~\cite{ILG}, и ниже мы описываем этот метод (применительно к произвольному двудольному графу $G$).

Для заданной функции весов $c$ и произвольной ротации $R=e_1a_1e_2a_2\cdots e_ka_k$ в $\Rscr_G$, где ребра $e_i$ матчинговые, а ребра $a_i$ активные, определим вес $R$ как

\begin{equation} \label{eq:rot-weight}
 c^R:=\sum(c(a_i)-c(e_i) \colon i=1,\ldots,k).
  \end{equation}
  
При трансформации относительно ротации $R$ к весу текущего матчинга прибавляются веса активных ребер и из него вычитаются веса матчинговых ребер в $R$. Поэтому вес любого стабильного матчинга $M$ выражается как
  $$
  c(M)=c(\Mmin)+ \sum(c^R\colon R\in\Rscr_M),
  $$
где $\Rscr_M$ -- идеал в $(\Rscr_G,\lessdot)$, соответствующий $M$ (т.е. множество ротаций, возникающих на участке (любой) трассы от $\Mmin$ до $M$).

Таким образом, \refeq{min-cost} эквивалентно задаче нахождения идеала минимального веса. В общем случае задача такого рода выглядит следующим образом:

\begin{numitem1} \label{eq:min-cl-set}
Для ориентированного графа $Q=(V_Q,E_Q)$ и функции $\zeta: V_Q\to\Rset$ найти замкнутое множество $X\subseteq V_Q$ минимального веса $\zeta(X):=\sum(\zeta(v)\colon v\in X)$.
  \end{numitem1}
(Напомним, что множество $X$ замкнутое, если нет ребер, идущих из $V_Q-X$ в $X$. В частности, замкнутыми множествами являются $\emptyset$ и $V_Q$. Без ограничения общности можно считать граф $Q$ ациклическим, так как любой ориентированный цикл не может разделяться замкнутым множеством, и его можно стянуть в одну вершину суммарного веса.)

Решение задачи~\refeq{min-cl-set}, предложенное Пикаром~\cite{pic}, состоит в ее сведении к  задаче о минимальном разрезе в ориентированном графе $\hat Q=(\hat V,\hat E)$ с пропускными способностями ребер $h(e)$, $e\in \hat E$, определяемыми следующим образом.

Положим $V^+:=\{v\in V_Q\colon \zeta(v)>0\}$ и  $V^-:=\{v\in V_Q\colon \zeta(v)<0\}$. Граф $\hat Q$ получается из $Q$ добавлением двух вершин: ``источника'' $s$ и ``стока'' $t$, а также множества ребер $E^+$ из $s$ в $v$ для всех $v\in V^+$ и множества ребер $E^-$ из $u$ в $t$ для всех $u\in V^-$. Пропускные способности ребер $e\in \hat E$ задаются так:
    $$
    h(e):=\left\{
    \begin{array}{rcl}
    \zeta(v), && \mbox{если $e=sv\in E^+$}; \\
    |\zeta(u)|, && \mbox{если $e=ut\in E^-$}; \\
    \infty, && \mbox{если $e\in E_Q$}.
     \end{array}
   \right.
   $$

Для подмножества $S\subseteq\hat V$ такого, что $s\in X\not\ni t$, обозначим через $\delta(S)$ множество ребер в $\hat Q$, идущих из $S$ в $\hat V-S$, называемое $s$--$t$ \emph{разрезом}; величина $h(\delta(S)):=\sum(h(e)\colon e\in\delta(S))$ считается пропускной способностью этого разреза. 

 \begin{lemma} {\rm \cite{pic}} \label{lm:min-cut}
Подмножество $X\subseteq V_Q$ является замкнутым множеством минимального веса в $(Q,\zeta)$ тогда и только тогда, когда $\delta((V_Q-X)\cup\{s\})$ является $s$--$t$ разрезом минимальной пропускной способности в $(\hat Q,h)$.
  \end{lemma}
    \begin{proof}
Заметим, что $X\subseteq V_Q$ -- замкнутое множество тогда и только тогда, когда разрез $E'=\delta((V_Q-X)\cup\{s\})$ не содержит ребер бесконечной пропускной способности; эквивалентно, $E'$ содержится в $E^+\cup E^-$. Для такого разреза, состоящего из ребер вида $sv$ для $v\in X$ и ребер вида $ut$ для $u\in V_Q-X$, пропускная способность выглядит так:
 \begin{multline*}
 h(E')=\zeta(X\cap V^+) + \sum(|\zeta(u)|\colon u\in(V_Q-X)\cap V^-) \\
   =\zeta(X\cap V^+)+\zeta(X\cap V^-)-\zeta(V^-)=\zeta(X)-\zeta(V^-).
   \end{multline*}

Следовательно, вес замкнутого множества отличается от пропускной способности соответствующего разреза на постоянную величину $-\zeta(V^-)$, откуда получаем требуемое утверждение.    
  \end{proof}

Таким образом, задача~\refeq{min-cost} сводится к задаче о минимильном двухполюсном разрезе в сети с $N=O(|E|)$ вершинами и $A=O(|E|)$ ребрами (учитывая то, что вместо всего посета $(\Rscr_G,\lessdot)$ достаточно рассматривать порождающий граф $H$, имеющий $(O(|E|)$ ребер (см. Предложение~\ref{pr:lessdot}). Применяя быстрые алгоритмы для задачи о максимальном потоке и минимальном разрезе (см., например, обзор в~\cite[Sec.~10.8]{sch}), можно получить временную оценку $O(NA\log N)\simeq O(|V|^4 \log |V|)$. В~\cite{ILG} приведена имплементация, решающая~\refeq{min-cost} за время $O(|V|^4)$.
 \medskip
 
\noindent\textbf{Замечание.} В~\cite{karz84} доказывалась труднорешаемость некоторых вариантов задачи о замкнутых множествах. Используя один из них, а также тот факт, что любой транзитивно замкнутый граф реализуется как посет ротаций (о чем будет сказано в разделе~\SEC{addit}), можно показать NP-трудность следующего усиления задачи~\refeq{min-cost}: для двудольного $G=(V,E,<)$, функций $c,g: E\to\Rset_+$ и числа $K\in\Rset_+$ найти стабильный матчинг $M\in\Mscr(G)$, минимизирующий $c(M)$ при условии $g(M)\ge K$. (Здесь $g(M)$ можно интерпретировать как прибыль, а $c(M)$ как затраты при организации союзов (или контрактов) в $M$.)

%----------------------- Sec. 6

\section{Полиэдральные аспекты и медианные стабильные матчинги}  \label{sec:polyhedr}

Как мы упоминали ранее, для двудольного графа $G=(V=I\sqcup J, E,<)$ имеется полиэдральная характеризация многогранника стабильных матчингов $\Pscrst(G)$, задаваемая линейным от $|V|,|E|$ числом неравенств. Здесь $\Pscrst(G)$ -- выпуклая оболочка множества характеристических векторов $\chi^M$ стабильных матчингов $M$ в пространстве $\Rset^{E}$. Первоначальное описание $\Pscrst(G)$ (в случае $G\simeq K_{n,n}$) было дано в работе Ванде Вейта~\cite{VV} (и повторена в работе~\cite{rot}). Ниже мы приводим описание (несколько отличающееся по виду, но близкое к тому, что в~\cite{VV}) и доказательство, основываясь на изложении в~\cite[Sec.~18.5g]{sch}. 

Для ребер $e,f\in E$ будем писать $f\prec e$, если они имеют общую вершину $v$, и выполняется $f<_v e$ (т.е. $f$ предпочтительнее, чем $e$). Для $e\in E$ обозначим $\gamma(e)$ множество ребер $f$ таких, что $f\preceq e$ (в частности, $e\in\gamma(e)$). 

Напомним, что многогранник матчингов в двудольном случае описывается системой линейных неравенств
 \begin{eqnarray} 
 x(e)\ge 0, && e\in E; \label{eq:xge0}\\
 x(\delta(v))\le 1, && v\in V. \label{eq:xdeltav}
   \end{eqnarray}
   
В случае стабильных матчингов добавляется еще один тип неравенств:
  \begin{equation} \label{eq:gammae}
  x(\gamma(e))\ge 1, \qquad e\in E. 
  \end{equation}
  
\begin{prop} \label{pr:st-match-pol}
Система~\refeq{xge0}--\refeq{gammae} описывает в точности множество векторов $x\in \Rset^E$, принадлежащих  $\Pscrst(G)$.
 \end{prop}
 \begin{proof}
~Легко видеть, что для любого стабильного матчинга $M$ вектор $x=\chi^M$ удовлетворяет~\refeq{xge0}--\refeq{gammae}. Поэтому достаточно доказать, что если $x$ -- вершина многогранника $\Pscr'$, определяемого~\refeq{xge0}--\refeq{gammae}, то вектор $x$  целочисленный. 

Положим $E^+:=\{e\in E\colon x(e)>0\}$ и обозначим $V^+$ множество вершин в $G$, покрытых $E^+$. Для $v\in V^+$ обозначим $e_v$ наилучшее ребро в $\delta(v)\cap E^+$ относительно порядка $<_v$. Справедливо следующее свойство:
  \begin{numitem1} \label{eq:worst}
для $v\in V^+$ и $e_v=\{v,u\}$ ребро $e_v$ является наихудшим в $(\delta(u)\cap E^+,<_u)$; кроме того, выполняется $x(\delta(u))=1$.
  \end{numitem1}
  
Действительно, обозначая $e:=e_v$, имеем
  \begin{gather*}
  1\le \sum(x(f)\colon f\preceq e)\; \mbox{(ввиду~\refeq{gammae})}\; 
  =\sum(x(f)\colon f\le_u e)\; \mbox{(ввиду определения $e_v$)} \\
  =x(\delta(u))-\sum(x(f)\colon f>_u e)\;\le 1-\sum(x(f)\colon f>_u e)\; \mbox{(применяя~\refeq{xdeltav} к $u$)}.
   \end{gather*}
 
Здесь все неравенства должны обращаться в равенства. Это дает $\sum(x(f)\colon f>_u e)=0$ и $x(\delta(u))=1$, откуда следует~\refeq{worst}.

Образуем множества $M:=\{e\in E^+\colon e=e_v$ для $v\in I\}$ и $L:=\{e\in E^+\colon e=e_v$ для $v\in J\}$. Для любой вершины $v\in I\cap V^+$ наилучшее ребро в $\delta(v)\cap E^+$ принадлежит $M$, и наихудшее ребро, и только оно, принадлежит $L$ (ввиду~\refeq{worst}); для вершин в $J\cap V^+$ поведение обратное. Отсюда следует, что оба $M$ и $L$ являются матчингами. Каждое ребро $e\in M\cap L$ образует компоненту в подграфе $(V^+, E^+)$; это дает  $x(e)=1$ (ввиду~\refeq{xdeltav},\refeq{gammae}). В частности, $x$ целочисленный, если $M=L$.  

Пусть теперь $M\ne L$. Очевидно, для любого ребра $e$ в $M':=M-L$ или в $L':=L-M$ выполняется $0<x(e)<1$. Поэтому можно выбрать достаточно малое число $\eps>0$ так, чтобы вектора $x':=x+\eps\chi^{M'}-\eps\chi^{L'}$ удовлетворяли~\refeq{xge0} и~\refeq{xdeltav}. Проверим, что оба $x'$ и $x''$ удовлетворяют также и~\refeq{gammae}.
 
Для этого рассмотрим ребро $e=mw$ с $x(e)<1$ и предположим, что $x'(\gamma(e))<x(\gamma(e))$. Это возможно только если $a\le_w e<_w b$, где $\{a\}= L\cap\delta(w)$ и $\{b\}= M\cap \delta(w)$. При этом должно выполняться либо (i) $m\notin V^+$, либо (ii) $m\in V^+$, и $e$ более предпочтительное, чем ребро в $M\cap\delta(m)$. Но в этих случаях из равенства $x(\delta(w)=1$ (ввиду~\refeq{worst}) и неравенства $x(b)>0$ следует
   $$
   x(\gamma(e))=\sum(x(f)\colon f\le_w e)\le x(\delta(w))-x(b)<1,
   $$
что невозможно. Аналогично,~\refeq{gammae} верно для $x''$.

Таким образом, $x',x''\in\Pscr'$. Но $x'\ne x''$ и $(x'+x'')/2=x$. Это противоречит тому, что $x$ -- вершина в $\Pscr'$. 
 \end{proof}

Имеется еще одно полезное свойство, показанное в~\cite{RRV}; мы приводим его в несколько иной, но эквивалентной, форме.

\begin{lemma} \label{lm:x(e)>0}
~Пусть $x\in \Pscrst(G)$, и пусть $e=mw$ -- ребро в $G$, для которого $x(e)>0$. Тогда $x(\delta(m))=x(\delta(w))=x(\gamma(e))=1$.
  \end{lemma}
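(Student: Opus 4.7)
The plan is to combine property~\refeq{worst} with the equality chain inside the proof of Proposition~\ref{pr:st-match-pol}, applied at both endpoints of~$e$. Since $x(e)>0$, the vertex $m$ lies in $V^+$. Let $e_m=\{m,w'\}$ denote the $<_m$-worst edge of $\delta(m)\cap E^+$. Property~\refeq{worst} then yields $x(\delta(w'))=1$; moreover, inspecting the derivation of~\refeq{worst} shows that the intermediate inequality $1\le x(\gamma(e_m))\le 1$ must hold with equality, so we also get $x(\gamma(e_m))=1$.

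I would then decompose $\gamma(e_m)$ as $A\cup B$, where $A=\{f\in\delta(m)\colon f\le_m e_m\}$, $B=\{f\in\delta(w')\colon f\le_{w'} e_m\}$ and $A\cap B=\{e_m\}$. By the very definition of $e_m$, every edge $f\in\delta(m)\setminus A$ satisfies $f>_m e_m$ and hence $x(f)=0$, so $x(A)=x(\delta(m))$; the analogous argument at $w'$ (using that $e_m$ is also the $<_{w'}$-worst edge of $\delta(w')\cap E^+$) gives $x(B)=x(\delta(w'))=1$. Substituting into $x(A)+x(B)-x(e_m)=x(\gamma(e_m))=1$ yields
\[ x(e_m)=x(\delta(m)), \]
which forces $x(f)=0$ for every $f\in\delta(m)\setminus\{e_m\}$. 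Hence $e_m$ is the \emph{unique} edge of $\delta(m)$ with positive $x$-value. Because $x(e)>0$ and $e\in\delta(m)$, this identifies $e=e_m$; in particular $w'=w$, so $x(\delta(w))=1$ and $x(\gamma(e))=x(\gamma(e_m))=1$.

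Finally, running the symmetric argument starting from $w$ (producing the $<_w$-worst edge $e_w$ in $\delta(w)\cap E^+$, identifying $e_w=e$ by the same uniqueness, and reading off the conclusion of~\refeq{worst}) supplies $x(\delta(m))=1$. The only non-routine step is the extraction of the equation $x(\gamma(e_m))=1$ from the chain of inequalities proving~\refeq{worst}; once it is in hand, everything else reduces to a short bookkeeping computation with sums over $\delta(m)$, $\delta(w')$ and $\gamma(e_m)$.
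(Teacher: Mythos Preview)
Your argument rests on a misreading of property~\refeq{worst}. That property says that if $e_v$ is the \emph{last} (worst) edge of $\delta(v)\cap E^+$ in the order $<_v$ and $e_v=\{v,u\}$, then $e_v$ is the \emph{first} (best) edge of $\delta(u)\cap E^+$ in $<_u$; the extremalities at the two endpoints are opposite. Hence your parenthetical ``using that $e_m$ is also the $<_{w'}$-worst edge of $\delta(w')\cap E^+$'' is exactly backwards: $e_m$ is the $<_{w'}$-\emph{best} edge there, so $B\cap E^+=\{e_m\}$ and $x(B)=x(e_m)$, not $x(\delta(w'))$. Plugging this into $x(A)+x(B)-x(e_m)=x(\gamma(e_m))$ yields $x(\gamma(e_m))=x(\delta(m))$, not $x(e_m)=x(\delta(m))$, and the key step ``$e_m$ is the unique edge of $\delta(m)$ with positive $x$-value'' disappears.

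That step is in fact unattainable: for a non-vertex $x\in\Pscrst(G)$ the conclusion $e=e_m$ is simply false. Take any two distinct stable matchings $M,L$, set $x=\tfrac12(\chi^M+\chi^L)$, and pick a vertex $m\in I$ at which $M$ and $L$ differ; then $\delta(m)\cap E^+$ contains two edges, and choosing $e$ to be the better one gives $x(e)>0$ with $e\ne e_m$. So your strategy of identifying the given edge $e$ with the extremal edge $e_m$ cannot succeed, and with it the route to $x(\delta(w))=1$ and $x(\gamma(e))=1$ collapses. (There is a further issue: the inequality chain proving~\refeq{worst} is carried out for a \emph{vertex} of $\Pscr'$, where $e_v$ is the only positive edge at $v$; for arbitrary $x$ the intermediate equality you want to squeeze does not follow from that chain.)

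The paper avoids all of this by working directly with the definition of $\Pscrst(G)$: write $x=\sum_i\alpha_i\chi^{M_i}$ as a convex combination of stable matchings. Since $x(e)>0$, some $M_i$ contains $e$, and then every $M_j$ covers both $m$ and $w$ by Proposition~\ref{pr:cover_vert}; this already gives $x(\delta(m))=x(\delta(w))=1$. For $x(\gamma(e))$ one checks, using the alternation property~\refeq{3path} for the pair $M_i,M_j$, that exactly one of the two $M_j$-edges at $m$ and $w$ lies in $\gamma(e)$, so $\chi^{M_j}(\gamma(e))=1$ for every $j$.
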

  \begin{proof} 
~Представим $x$ как $\alpha_1\chi^{M_1}+\cdots+ \alpha_k\chi^{M_k}$, где  $M_i$ -- стабильный матчинг, $\alpha_i>0$, и $\alpha_1+\cdots+ \alpha_k=1$. Обозначим $x_i:=\chi^{M_i}$. Ввиду $x(e)>0$, ребро $e$ принадлежит некоторому $M_i$. Тогда $x_i(\delta(m))=x_i(\delta(w))=x_i(\gamma(e))=1$. Аналогичные равенства имеют место и для матчинга $M_j$, не содержащего $e$. Действительно, так как $M_i$ и $M_j$ покрывают одно и то же множество вершин (ввиду Предложения~\ref{pr:cover_vert}), в $M_j$ есть ребро $e'$, инцидентное $m$, и ребро $e''$, инцидентное $w$. Более того, рассматривая пару $M_i,M_j$ и применяя~\refeq{3path}, получаем либо $e'<_m e<_w e''$, либо $e'>_m e>_w e''$. В обоих случаях в $\gamma(e)$ попадает ровно одно ребро из $e',e''$; поэтому $x_j(\gamma(e))=1$. Теперь утверждение следует из $\alpha_1+\cdots +\alpha_k=1$.
\end{proof}

Эта лемма помогает получить интересный результат Тео и Сетарамана.

\begin{prop} {\rm\cite{TS}} \label{pr:k-l-k}
Пусть $M_1,\ldots,M_\ell$ -- стабильные матчинги в $G$. Для каждого $m\in \tilde I$ обозначим $E_m$ список (с возможными повторениями) ребер в $\delta(m)$, принадлежащих этим матчингам, упорядоченный согласно $<_m$, и пусть $e_m(i)$ обозначает $i$-й элемент в $E_m$, $i=1,\ldots,\ell$. Аналогично, для каждого $w\in\tilde J$  пусть $e_w(j)$ обозначает $j$-й элемент в списке $E_w$ ребер в $\delta(w)$, принадлежащих данным матчингам,  упорядоченном согласно $<_w$, $j=1,\ldots,\ell$. Тогда для любого $k\in\{1,\ldots,\ell\}$ множество ребер $A(k):=\{e_m(k)\colon m\in\tilde I\}$ совпадает с множеством ребер $B(\ell-k+1):=\{e_w(\ell-k+1)\colon w\in \tilde J\}$ и образует стабильный матчинг в $G$.
 \end{prop}
\begin{proof}
~Положим $x_i:=\frac{1}{\ell} \chi^{M_i}$. Тогда $x=x_1+\cdots+x_\ell$ лежит в многограннике $\Pscrst(G)$ (является ``дробным стабильным матчингом'').

Вначале предположим для простоты, что все ребра в $M_1,\ldots,M_\ell$ различны. Тогда для $m\in\tilde I$ список $E_m$ состоит из $\ell$ различных ребер, и ребро $e_m(k)$ является $k$-м элементом в $E_m$. Из Леммы~\ref{lm:x(e)>0}, примененной к $x$ и $e=e_m(k)$ следует, что множество $\gamma(e)$ состоит из $\ell$ элементов. Тогда $e$ является в точности $(\ell-k+1)$-м элементом в списке $E_w$ (по предположению состоящем из $\ell$ различных ребер), и тем самым $e_m(k)=e_w(\ell-k+1)$.

Таким образом, $A(k)=B(\ell-k+1)$, откуда следует, что $A(k)$ является матчингом в $G$. Чтобы показать стабильность $A(k)$ рассмотрим произвольное ребро $e=mw\notin A(k)$ в $G$. Надо проверить, что $e$ не блокирует $A(k)$, или, эквивалентно, что $A(k)\cap\gamma(e)\ne \emptyset$.

По крайней мере один из концов $e$, скажем, $m$, принадлежит $\tilde V$ (и покрывается $A(k)$); иначе для $x$ и $e$ мы имели бы $x(\gamma(e))=0$ вопреки~\refeq{gammae}. Если $w\notin \tilde V$, то $x(\delta(w))=0$, и из неравенства $x(\gamma(e))\ge 1$ следует, что для всех ребер $f\in E_m$ (включая $f=e_m(k)$) выполняется $f<_m e$. Это дает требуемое $A(k)\cap \gamma(e)\ne\emptyset$.

Пусть теперь $w\in\tilde V$. Ввиду $x(\gamma(e))\ge 1$, число ребер $f$ в $E_m\cup E_w$, для которых $f\preceq e$, не меньше $\ell$. Тогда должно выполняться по крайней мере одно из $e_m(k)\le_m e$ и $e_w(\ell-k+1)\le_w e$, откуда следует $A(k)\cap\gamma(e)\ne\emptyset$.

В том случае, когда в $M_1,\ldots,M_\ell$ есть общие ребра, можно рассмотреть мультиграф, получаемый из $G$ заменой каждого ребра $e=mw$ с $x(e)>0$ на $\ell x(e)=:r$ параллельных ребер $e^1,\ldots,e^r$. При этом продолжение порядка $<_m$ на эти ребра назначается противоположным продолжению порядка $<_w$, скажем, $e^1<_m \cdots <_m e^r$ и $e^1>_w \cdots >_w e^r$. Требуемое утверждение для этого общего случая получается повторением (с незначительными уточнениями) рассуждений для случая непересекающихся матчингов выше.
\end{proof}
   
\begin{corollary} \label{cor:median}
Если $\ell$ нечетно, то для $k:=(\ell+1)/2$ множество, состоящее из $k$-х по порядку элементов в списках ребер $E_v$, инцидентных $v$ и принадлежащих $M_1,\ldots,M_\ell$, для всех вершин $v\in \tilde V$, является стабильным матчингом.
  \end{corollary}
  
Такой матчинг называют \emph{медианным} для $M_1,\ldots,M_\ell$. В~\cite{TS} был поставлен вопрос о возможности эффективного нахождения медианного стабильного матчинга среди всех стабильных матчингов в $G$ (или ``почти медианного'', когда число стабильных матчингов четное). Нам это представляется маловероятным, в свете того, что задача вычисления числа $|\Mscr(G)|$ является труднорешаемой, о чем будет сказано ниже.

%----------------------- Sec. 7

\section{Дополнительные замечания}  \label{sec:addit}

Кнут~\cite{knu} указал примеры, когда число стабильных матчингов двудольного графа экспоненциально велико по сравнению с размером графа, и задал вопрос о сложности точного вычисления этого числа. Ответ был дан Ирвингом и Лейтером~\cite{IL}, которые показали труднорешаемость такой задачи (рассматривая графы вида $(K_{n,n},<)$). Ниже мы кратко поясним идею их доказательства.

Напомним некоторые понятия (отсылая за точными определениями к~\cite{val} или~\cite[Разд.~7.3]{GJ}). Не вдаваясь в полную логическую строгость, можно понимать, что описание той или иной перечислительной задачи (enumeration problem) $\Pscr$ состоит из бесконечного семейства $\Sscr$ конечных множеств, и для каждого $S\in\Sscr$  имеется семейство $\Fscr(S)$ подмножеств в $S$ (``объектов''). В задаче $\Pscr$ требуется для заданного $S\in \Sscr$ определить число $|\Fscr(S)|$. Говорят, что $\Pscr$ является $\# P$-\emph{задачей} (или $KP$-\emph{задачей}, в терминологии некоторых авторов), если распознавание объекта проводится за полиномиальное время, т.е. имеется алгоритм, который для любых $S\in\Sscr$ и $X\subseteq S$ за время, полиномиальное от $|S|$, определяет принадлежит или нет данное множество $X$ семейству $\Fscr(S)$. Говорят, что $\#P$-задача $\Pscr=\{\Fscr(S), S\in\Sscr\}$ является $\#P$-\emph{полной} (или универсальной в классе $\#P$), если любая другая $\#P$-задача $\Pscr'=\{\Fscr'(S'), S'\in\Sscr'\}$ сводится к ней за полиномиальное время (т.е. есть отображение $\omega:\Sscr'\to \Sscr$ такое, что для каждого $S'\in\Sscr$ число $|\Fscr'(S')|$ определяется из $|\Fscr(\omega(S'))|$ за время, полиномиальное от $|S'|$).

В рассматриваемой нами задаче роль cемейства $\Sscr$ играет совокупность реберных множеств $E$ двудольных графов $G=(V,E,<)$, а роль семейства $\Fscr(S)$, $S\in\Sscr$, -- соответствующее множество стабильных матчингов в $G$. Задача определения $|\Mscr(G)|$ -- это действительно $\#P$-задача, так как для любого подмножества $M\subseteq E$ можно определить, является ли $M$ стабильным матчингом, за время $O(|E|)$. 

Заметим, что $\#P$-аналог любой $NP$-полной задачи является ``труднорешаемым''
(поскольку в последней  задаче требуется ``всего лишь'' определить, является ли непустым соответствующее семейство объектов $\Fscr(S)$ (например, содержит ли заданный граф хотя бы один гамильтонов цикл), а в первой нужно найти количество объектов). Однако есть $P$-задачи, чьи перечислительные аналоги являются $\#P$-полными. Именно таковой и является задача определения $|\Mscr(G)|$. Это вытекает из следующих двух результатов.

 \begin{prop} \label{pr:poset-anti}
Задача определения числа анти-цепей в конечном посете является $\#P$-полной.
 \end{prop}
\begin{prop} \label{pr:poset-match}
Пусть $(P,<')$ -- посет на $n$ элементах. Существует и может быть построен за время полиномиальное от $n$ двудольный граф \mbox{$G=(V,E,<)$} такой, что его ротационный посет $(\Rscr_G,\lessdot)$ изоморфен $(P,<')$. Следовательно (ввиду Предложения~\ref{pr:ideal-match}), число $|\Mscr(G)|$ стабильных матчингов в $G$ равно числу анти-цепей (или числу идеалов) в посете $(P,<')$.
 \end{prop}
 
Предложение~\ref{pr:poset-anti} было установлено Прованом и Боллом~\cite{PB}. Предложение~\ref{pr:poset-match} было доказано в~\cite[Sec.~5]{IL} путем явной конструкции требуемого графа $G$ для заданного посета~$(P,<')$.

\end{document}